\newtheorem{thm}{Theorem}[section] 
\newtheorem{cor}[thm]{Corollary}
\theoremstyle{definition}
\newcommand\operA[2]{{\if!#2!\operatorname{#1}\else{\operatorname{#1}_{#2}^{\phantom{I}}}\fi}} 
\def\tr{{\operatorname{Tr}}}
\newcommand{\Trace}[1][]{\if!#1!\operatorname{Tr}\else{\operatorname{Tr}_{#1}^{\phantom{I}}}\fi} 
\long\def\forget#1\forgotten{{}} %
\def\({\left(}
\def\){\right)}
\newcommand\LAY[3][]{{\begin{array}{c}\mbox{#2} \if#1!{}\else{+}\fi \\ \mbox{#3}\end{array}}}
\def\ps@pprintTitle{%
 \let\@oddhead\@empty
 \let\@evenhead\@empty
 \def\@oddfoot{}%
 \let\@evenfoot\@oddfoot}
\newcommand{\bigperp}{%
  \mathop{\mathpalette\bigp@rp\relax}%
  \displaylimits
}
\newcommand{\bigp@rp}[2]{%
  \vcenter{
    \m@th\hbox{\scalebox{\ifx#1\displaystyle2.1\else1.5\fi}{$#1\perp$}}
  }%
}
\renewcommand{\geq}{\geqslant}
\renewcommand{\leq}{\leqslant}
\newif\iffurther
\begin{document}

\title{Symbol Length of Classes in Milnor $K$-groups}

\author{Adam Chapman}
\address{School of Computer Science, Academic College of Tel-Aviv-Yaffo, Rabenu Yeruham St., P.O.B 8401 Yaffo, 6818211, Israel}
\email{adam1chapman@yahoo.com}

\begin{abstract}
Given a field $F$, a positive integer $m$ and an integer $n\geq 2$, we prove that the symbol length of classes in Milnor's $K$-groups $K_n F/2^m K_n F$ that are equivalent to single symbols under the embedding into $K_n F/2^{m+1} K_n F$ is at most $2^{n-1}$ under the assumption that $F \supseteq \mu_{2^{m+1}}$.
Since for $n=2$, $K_2 F/2^m K_2 F \cong {_{2^m}Br(F)}$, this coincides with the upper bound of $2$ for the symbol length of central simple algebras of exponent $2^m$ that are Brauer equivalent to a single symbol algebra of degree $2^{m+1}$ proved by Tignol in 1983.
We also consider the cases where the embedding into $K_n F/2^{m+1} K_n F$ is of symbol length 2, 3 and 4 (the latter when $n=2$). We finish with studying the symbol length of classes in $K_3/3^m K_3 F$ whose embedding into $K_3 F/3^{m+1} K_3 F$ is one symbol when $F \supseteq \mu_{3^{m+1}}$.
\end{abstract}

\keywords{
Algebraic $K$-Theory; Milnor $K$-Theory; Symmetric Bilinear Forms; Quadratic Forms; Symbol Length; Quaternion Algebras}
\subjclass[2010]{19D45 (primary); 11E04, 11E81, 13A35, 16K20 20G10 (secondary)
}
\maketitle

\section{Introduction}

Given a positive integer $r$ and a field $F$ of $\operatorname{char}(F) \nmid r$ containing a primtiive $r$th root of unity $\rho$, a symbol algebra of degree $r$ over $F$ is an algebra of the form $(\alpha,\beta)_{r,F}=F \langle x,y : x^{r}=\alpha, y^{r}=\beta, yx=\rho xy \rangle$ for some $\alpha,\beta \in F^\times$. 
The group ${_rBr}(F)$ is known (\cite{MS}) to be generated by the Brauer classes of such symbol algebras. The symbol length of an element in ${_rBr(F)}$ is the minimal number of symbol algebras of degree $r$ required to express it.
In \cite{Tignol:1983}, Tignol proved that if a central simple algebra of exponent $r$ is Brauer equivalent to a symbol algebra of degree $t\cdot r$ then its symbol length in ${_rBr}(F)$ is at most $t$.
In particular, taking $r=2^m$ and $t=2$, the upper bound is 2.
We provide here the analogous statement for classes in $K_n F/2^m K_n F$ which are congruent to single symbols when embedded into $K_n F/2^{m+1} K_n F$, showing that the symbol length of those classes in $K_n F/2^m K_n F$ is at most $2^{n-1}$ when $F \supseteq \mu_{2^{m+1}}$.
For the special case of $n=2$ this coincides with Tignol's result, because of the isomorphism $K_2 F/2^m K_2 F \cong {_{2^m}Br}(F)$.
We also consider the cases where the embedding into $K_n F/2^{m+1} K_n F$ is of symbol length  2, 3 and 4 (the latter only when $n=2$), and provide upper bounds for the original classes in $K_n F/2^m K_n F$.
We finish with studying the symbol length of classes in Milnor's $K$-groups $K_3/3^m K_3 F$ whose embedding into $K_3 F/3^{m+1} K_3 F$ is of symbol length 1 when $F \supseteq \mu_{3^{m+1}}$.
In most of the proofs, the trick is to apply a known chain lemma. Such a trick was applied earlier in \cite[Theorem 4.1]{Matzri:2014}.

\section{Preliminaries}

Given a field $F$ and a positive integer $n$, the Milnor $K$-group $K_n F$ is defined as the group of formal sums of symbols $\{\alpha_1,\dots,\alpha_n\}$ with entries in $F^\times$, modulo the relations
\begin{itemize}
\item $\{\alpha_1,\dots,\alpha_{i-1},\alpha_i,\alpha_{i+1}\dots,\alpha_n\}+\{\alpha_1,\dots,\alpha_{i-1},\alpha_i',\alpha_{i+1}\dots,\alpha_n\}\\=\{\alpha_1,\dots,\alpha_{i-1},\alpha_i\alpha_i',\alpha_{i+1}\dots,\alpha_n\}$, and
\item $\{\dots,\alpha,\dots,1-\alpha,\dots\}=0$.
\end{itemize}
The groups we are interested in are $K_n F/\ell K_n F$ in which we have the additional relations $\underbrace{\omega +\dots+\omega}_{\ell \ \text{times}}=0$ for any $\omega\in K_n F$.
This relation gives rise to several identities such as $\{\alpha,-\alpha\}=0$, and so on.
When $\operatorname{char}(F) \nmid \ell$ and $F$ contains the group of $\ell$th roots of unity $\mu_\ell$, $K_n F/\ell K_n F \cong H^n(F,\mu_\ell^{\otimes n})$ by \cite{Voevodsky:2011}.
In the special case of $\ell=2$, $K_n F/2 K_n F \cong I^n F/I^{n+1} F$ by \cite{OVV} and \cite{Voevodsky} (when $\operatorname{char}(F) \neq 2$) and \cite{Kato:1982} (when $\operatorname{char}(F)=2$), where $IF$ is the fundamental ideal of even-dimensional symmetric bilinear forms in the Witt ring $W F$ of $F$.
Recall that 
$$\xymatrix{
1\ar@{->}[r] & K_n F/p^m K_n F\ar@{->}[r]^{\operatorname{Shift}}&K_n F/p^{m+1} K_n F \ar@{->}[r]^{\operatorname{Exp}} &K_n F/ pK_n F \ar@{->}[r] &1
}$$
for any prime integer $p$ and positive integer $m$, where the maps are given by
$$\operatorname{Shift}(\{\alpha_1,\dots,\alpha_n\})=p\cdot \{\alpha_1,\dots,\alpha_n\}, \operatorname{Exp}(\{\alpha_1,\dots,\alpha_n\})=\{\alpha_1,\dots,\alpha_n\},$$
as long as either $\operatorname{char}(F)=p$ (see \cite[Remark 2.32]{AravireJacobORyan:2018}) or $F \subseteq \mu_{p^{m+1}}$ (see \cite[Page 215]{GilleSzamuely:2006}).
In order to avoid confusion, we shall add a subscript indicating the group the symbol lives in: $\{\alpha_1,\dots,\alpha_n\}_{p^m}$ refers to the symbol in $K_n F/p^m K_n F$.
There is also the cup product $\cup$ which maps the pair $$(\{\alpha_1,\dots,\alpha_{n_1}\}_{p^m} , \{\beta_1,\dots,\beta_{n_2}\}_{p^m})\in K_{n_1} F/p^m K_{n_1} F \times K_{n_2} F / p^m K_{n_2} F$$ to $\{\alpha_1,\dots,\alpha_{n_1},\beta_1,\dots,\beta_{n_2}\}_{p^m}\in K_{n_1+n_2}F/p^m K_{n_1+n_2} F.$
The symbol length of a class in $K_n F/p^m K_n F$ is the minimal number of symbols needed to express it as a sum of symbols.
\section{Symbol Length in $K_n F/2^m K_n F$}

\begin{thm}\label{T1}
Let $F$ be a field, $m$ a nonnegative integer and $n$ an integer $\geq 2$.
Suppose that either $\operatorname{char}(F)=2$ or $F\supseteq \mu_{2^{m+1}}$.
Consider a class $\omega \in K_n F/2^m K_n F$ such that $\omega$ is represented by a single symbol when embedded into $K_n F/2^{m+1} K_n F$.
Then its symbol length in $K_n F/2^m K_n F$ is at most $2^{n-1}$.
\end{thm}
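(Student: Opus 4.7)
The plan is to induct on $n$, taking as base case $n=2$, which is Tignol's 1983 theorem transported to Milnor $K$-theory via the isomorphism $K_2 F/2^m K_2 F \cong {}_{2^m}\operatorname{Br}(F)$. Assume the analogous bound $2^{n-2}$ at level $n-1$, and write $\operatorname{Shift}(\omega) = \{\beta_1,\dots,\beta_n\}_{2^{m+1}}$; since $\operatorname{Shift}(\omega)=2\omega$, passing through $\operatorname{Exp}$ to $K_n F/2K_n F$ yields $\{\beta_1,\dots,\beta_n\}_2 = 0$.

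Under the isomorphism $K_n F/2K_n F \cong I^n F/I^{n+1}F$ (by Voevodsky/OVV in characteristic $\neq 2$, and by Kato in characteristic $2$), this vanishing forces the Pfister form $\langle\!\langle\beta_1,\dots,\beta_n\rangle\!\rangle$ to be hyperbolic, hence isotropic, so $\beta_n$ is represented by $\langle\!\langle\beta_1,\dots,\beta_{n-1}\rangle\!\rangle$. This produces an additive decomposition
$$\beta_n \;=\; \sum_{S \subseteq \{1,\dots,n-1\}} y_S^2 \prod_{i\in S}(-\beta_i)$$
with $2^{n-1}$ terms. I group the sum according to whether $n-1 \in S$, writing $\beta_n = u - \beta_{n-1}u'$ where $u$ and $u'$ are both represented values of the shorter Pfister form $\langle\!\langle\beta_1,\dots,\beta_{n-2}\rangle\!\rangle$; consequently $\{\beta_1,\dots,\beta_{n-2},u\}_2 = \{\beta_1,\dots,\beta_{n-2},u'\}_2 = 0$ in $K_{n-1}F/2K_{n-1}F$.

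Since $u/\beta_n + (-\beta_{n-1}u')/\beta_n = 1$, the Steinberg relation supplies $\{u/\beta_n,-\beta_{n-1}u'/\beta_n\}=0$ in $K_2 F$. Expanding, and using bilinearity together with $\{a,a\}=\{a,-1\}$ and the $2$-torsion of $\{a,-1\}$, one rearranges to obtain
$$\{\beta_{n-1},\beta_n\} \;=\; -\{u,-1\}-\{u,\beta_{n-1}\}-\{u,u'\}+\{u,\beta_n\}+\{\beta_n,u'\}.$$
Cupping both sides with $\{\beta_1,\dots,\beta_{n-2}\}$ and collapsing, by multiplicativity in the last slot, the four terms that share the length-$(n-1)$ subsymbol $\{\beta_1,\dots,\beta_{n-2},u\}$ reduces the right-hand side to a sum of just two length-$n$ symbols:
$$\{\beta_1,\dots,\beta_{n-2},u,\,-\beta_n/(\beta_{n-1}u')\} \;-\; \{\beta_1,\dots,\beta_{n-2},u',\beta_n\}.$$
Each of these contains a length-$(n-1)$ subsymbol ($\{\beta_1,\dots,\beta_{n-2},u\}$ or $\{\beta_1,\dots,\beta_{n-2},u'\}$) that is a single symbol in $K_{n-1}F/2^{m+1}K_{n-1}F$ vanishing modulo $2$; by the inductive hypothesis, each such subsymbol is $2$ times a class of symbol length at most $2^{n-2}$ in $K_{n-1}F/2^m K_{n-1}F$. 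Cupping those preimages with the remaining final slot and invoking the injectivity of $\operatorname{Shift}$ from the exact sequence realizes $\omega$ in $K_n F/2^m K_n F$ as a sum of at most $2\cdot 2^{n-2} = 2^{n-1}$ symbols.

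The main obstacle is the Steinberg manipulation: one must track sign conventions, the $2$-torsion of $\{a,-1\}$, and the multiplicativity in the last Milnor slot precisely to collapse the five raw summands into exactly two length-$n$ symbols with the prescribed subsymbols. The bound $2^{n-1}$ is tight for this argument because it hinges on choosing $u$ and $u'$ specifically as the two halves of the Pfister representation of $\beta_n$ obtained by splitting at the index $n-1$, which is what forces the two relevant length-$(n-1)$ subsymbols to vanish modulo $2$ and fall under the inductive hypothesis.
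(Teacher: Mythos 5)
Your argument is essentially correct and reaches the same bound, but by a genuinely different route in the inductive step. The paper also represents $\alpha_n$ (your $\beta_n$) by the $(n-1)$-fold Pfister form, but it uses the finer ``chain'' expansion $-\alpha_n=\varphi_{n-2}(t_{n-2})\alpha_{n-1}+\dots+\varphi_1(t_1)\alpha_2-\varphi_1(t_0)$ and telescopes $\{\alpha_1,\dots,\alpha_n\}$ into $n-1$ symbols of the form $\{\alpha_1,\dots,\alpha_{i-1},\beta_i^{-1}\alpha_i,\beta_{i+1},\dots,\beta_n\}$ plus one residual symbol; the preimages then have lengths $2,2,4,\dots,2^{n-2}$, summing to $2^{n-1}$. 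You instead split the representation of $\beta_n$ only at the index $n-1$, writing $\beta_n=u-\beta_{n-1}u'$ with $u,u'$ values of $\langle\!\langle\beta_1,\dots,\beta_{n-2}\rangle\!\rangle$, and a single Steinberg manipulation collapses $\{\beta_1,\dots,\beta_n\}$ into exactly two symbols, each carrying an $(n-1)$-subsymbol that dies mod $2$; I checked the identity
$$\{\beta_1,\dots,\beta_n\}_{2^{m+1}}=\{\beta_1,\dots,\beta_{n-2},u,-\beta_n/(\beta_{n-1}u')\}_{2^{m+1}}-\{\beta_1,\dots,\beta_{n-2},u',\beta_n\}_{2^{m+1}}$$
and it holds. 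Your decomposition is coarser ($2\cdot 2^{n-2}$ versus $2+2+4+\dots+2^{n-2}$) but gives the same total; it is arguably cleaner, and it uses roundness of Pfister forms in place of the pure-subform chain.

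Two points need attention. First, the degenerate cases: the Steinberg relation $\{u/\beta_n,-\beta_{n-1}u'/\beta_n\}=0$ requires $u\neq 0$ and $u'\neq 0$, and you must also dispose of the case where $\langle\!\langle\beta_1,\dots,\beta_{n-1}\rangle\!\rangle$ (or the shorter subform) is already isotropic. All of these are easy --- e.g.\ if $u'=0$ then $\beta_n$ is represented by $\langle\!\langle\beta_1,\dots,\beta_{n-2}\rangle\!\rangle$ and induction applies directly after a permutation of slots --- but they are exactly the cases the paper handles with its ``$\varphi_{n-1}$ isotropic'' branch and its ``$t_{i-1}=0$'' and ``$t_0=0$'' provisos, so you should not omit them. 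Second, your base case invokes Tignol via $K_2F/2^mK_2F\cong{_{2^m}\mathrm{Br}}(F)$, which is fine when $\operatorname{char}(F)\neq 2$ and $\mu_{2^{m+1}}\subseteq F$, but the theorem as stated also allows $\operatorname{char}(F)=2$, where that isomorphism is not available ($K_2F/2K_2F$ is then $I^2F/I^3F$ by Kato, not ${_2\mathrm{Br}}(F)$); the paper instead proves $n=2$ directly from $\beta=x^2-\alpha y^2$, which covers both characteristics. Neither issue is fatal, but both must be patched for the proof to cover the full statement.
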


\begin{proof}
We prove it by induction on $n$.
Suppose the statement holds true for any number $\leq n-1$.
Let $\omega \in K_n F/2^m K_n F$ and write $\operatorname{Shift}(\omega)=\{\alpha_1,\dots,\alpha_n\}_{2^{m+1}}$ as an element in $K_n F/2^{m+1} K_n F$.
Now, $\operatorname{Exp}(\omega)=\{\alpha_1,\dots,\alpha_n\}_2$ is trivial, and so the corresponding bilinear Pfister form $\varphi_n=\langle \! \langle \alpha_1,\dots,\alpha_n\rangle \! \rangle$ is isotropic.
Write $\varphi_i=\langle \! \langle \alpha_1,\dots,\alpha_i \rangle \! \rangle$ for any $i \in \{1,\dots,n-1\}$.
If $\varphi_{n-1}$ is isotropic, then $\{\alpha_1,\dots,\alpha_{n-1}\}_{2^{m+1}}$ is in the image of $\operatorname{Shift}$,
 and so the symbol length of the $\operatorname{Shift}^{-1}\{\alpha_1,\dots,\alpha_{n-1}\}_{2^{m+1}}$ is at most $2^{n-2}$ by the induction hypothesis, 
 and therefore the symbol length of $\omega=\operatorname{Shift}^{-1}(\{\alpha_1,\dots,\alpha_{n-1}\}_{2^{m+1}}) \cup \{\alpha_n\}_{2^m}$ is at most $2^{n-2}<2^{n-1}$.
Suppose $\varphi_{n-1}$ is anisotropic. Then $-\alpha_n=\varphi_{n-2}(t_{n-2})\alpha_{n-1}+\dots+\varphi_1(t_1)\alpha_2-\varphi_1(t_0)$ where $t_i \in V_{\varphi_i}$ for any $i \in \{1,\dots,n-1\}$ and $t_0\in V_{\varphi_1}$.
For any $i \in \{2,\dots,n-1\}$, let $\beta_i$ be $\varphi_{i-1}(t_{i-1})\alpha_i$ if $t_{i-1}\neq 0$ and $\alpha_i$ otherwise. For convenience, write $\beta_n=\alpha_n$.
By the induction hypothesis, for each $i\in \{2,\dots,n-1\}$, the symbol length of $\operatorname{Shift}^{-1}(\{\alpha_1,\dots,\alpha_{i-1},\beta_i^{-1}\alpha_i\}_{2^{m+1}})$ is at most $2^{i-1}$.
Now, $\{\alpha_1,\dots,\alpha_n\}_{2^{m+1}}$ is the sum
of the symbols 
\begin{eqnarray*}
\{\alpha_1,\beta_2,\dots,\beta_n\}_{2^{m+1}}, \{\alpha_1,\beta_2^{-1} \alpha_2,\beta_3,\dots,\beta_n\}_{2^{m+1}}, \{\alpha_1,\alpha_2,\beta_3^{-1} \alpha_3,\beta_4,\dots,\beta_n\}_{2^{m+1}},\\ \dots,\{\alpha_1,\dots,\alpha_{n-2},\beta_{n-1}^{-1} \alpha_{n-1},\beta_n\}_{2^{m+1}}.
\end{eqnarray*}
Each of these symbols is in the image of $\operatorname{Shift}$, and the symbol lengths of the pre-images of the last $n-1$ symbols are by the induction hypothesis $2,4,\dots,2^{n-2}$.
If $t_0=0$, $\{\beta_2,\dots,\beta_n\}_{2^{m+1}}$ is trivial because $-\beta_n$ is a partial sum of $\beta_2,\dots,\beta_{n-1}$.
If $t_0 \neq 0$, then $\{\alpha_1,\beta_2,\dots,\beta_n\}_{2^{m+1}}$ is congruent to $\{\alpha_1,\dots,\varphi_1(t_0)\}_{2^{m+1}}$, and since the symbol length of $\operatorname{Shift}^{-1}(\{\alpha_1,\varphi_1(t_0)\})$ is at most 2, the symbol length of $\operatorname{Shift}^{-1}(\{\alpha_1,\beta_2,\dots,\beta_n\}_{2^{m+1}})$ is at most 2.
Altogether, the symbol length of $\omega$ is at most $2+2+4+\dots+2^{n-2}=2^{n-1}$.

It remains to explain why the statement is true for $n=2$.
Consider $\operatorname{Shift}(\omega)=\{\alpha,\beta\}_{2^{m+1}}$. Since $\{\alpha,\beta\}_2$ is trivial, $\beta=x^2-\alpha y^2$ for some $x,y \in F$, not both zero.
If $y=0$, then $\omega=\{\alpha,x\}_{2^m}$.
If $x=0$, then $\{\alpha,\beta\}_{2^{m+1}}=\{\alpha,-\alpha y^2\}_{2^{m+1}}=\{\alpha,y^2\}_{2^{m+1}}$, and so $\omega=\{\alpha,y\}_{2^m}$.
Suppose $x,y \neq 0$.
Then, $\{\alpha,x^2-\alpha y^2\}_{2^{m+1}}$ is the sum of $\{\alpha,y^2\}_{2^{m+1}}$ and $\{\alpha,\frac{x^2}{y^2}-\alpha\}_{2^{m+1}}$.
The first symbol is $\operatorname{Shift}(\{\alpha,x\}_{2^m})$. The second symbol is congruent to $\{\frac{x^2}{y^2},-\alpha^{-1}(\frac{x^2}{y^2}-\alpha)\}_{2^{m+1}}$. Therefore $\omega=\{\alpha,x\}_{2^m}+\{\frac{x}{y},-\alpha^{-1}(\frac{x^2}{y^2}-\alpha)\}_{2^m}$.
\end{proof}

As a result of this theorem, we can compute the symbol length of classes whose image under $\operatorname{Shift}$ is of higher symbol length when $\operatorname{char}(F)\neq 2$:
\begin{thm}\label{T2}
Let $F$ be a field of $\operatorname{char}(F)\neq 2$, $m$ a nonnegative integer and $n$ an integer $\geq 2$. Suppose $F\supseteq \mu_{2^{m+1}}$.
Let $\omega$ be a class in $K_n F/2^m K_n F$ such that $\operatorname{Shift}(\omega)=\{\alpha_1,\dots,\alpha_n\}_{2^{m+1}}-\{\beta_1,\dots,\beta_i,\alpha_{i+1},\dots,\alpha_n\}_{2^{m+1}}$. Then the symbol length of $\omega$ is 
at most $i \cdot 2^{n-1}+(i-1)\cdot 2^{n-2}+\dots+1\cdot 2^{n-i}$.
\end{thm}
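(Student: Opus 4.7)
The plan is to induct on $i$, with Theorem~\ref{T1} as the engine. A direct re-indexing shows that the stated bound $T(i,n):=\sum_{j=1}^{i}(i+1-j)\,2^{n-j}$ satisfies the recursion $T(i,n)=i\cdot 2^{n-1}+T(i-1,n-1)$. The base case $i=1$ is immediate: by bilinearity $\operatorname{Shift}(\omega)$ collapses to the single symbol $\{\alpha_1/\beta_1,\alpha_2,\dots,\alpha_n\}_{2^{m+1}}$, and Theorem~\ref{T1} directly gives the bound $2^{n-1}=T(1,n)$.

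For the inductive step, I would split $\omega=\omega_0+\omega'$ with $\omega_0$ of length at most $i\cdot 2^{n-1}$ and $\omega'$ of length at most $T(i-1,n-1)$, matching the recursion. The key construction is a telescoping $A-B=\sum_{j=1}^{i}(C_{j-1}-C_j)$ along the chain $C_j=\{\beta_1,\dots,\beta_j,\alpha_{j+1},\dots,\alpha_n\}_{2^{m+1}}$, in which each consecutive difference is a single symbol of the form $\{\beta_1,\dots,\beta_{j-1},\alpha_j/\beta_j,\alpha_{j+1},\dots,\alpha_n\}_{2^{m+1}}$. The telescoping alone does not yield individually Shift-compatible pieces (their $\operatorname{Exp}$-images need not vanish one by one, only in total), so I would modify the chain by inserting pivots — a chain-lemma manipulation in the spirit of the quaternion chain lemma invoked in \cite{Matzri:2014} and in the proof of Theorem~\ref{T1} — to arrange that each of $i$ summands lies in the image of $\operatorname{Shift}$ and contributes $2^{n-1}$ via Theorem~\ref{T1}, while the remaining material assembles into a residual $\omega'$ obtained by peeling off a shared variable (say $\alpha_n$) via the cup product structure: $\omega'=\tilde{\omega}\cup\{\alpha_n\}_{2^m}$, where $\tilde{\omega}\in K_{n-1}F/2^mK_{n-1}F$ satisfies the hypothesis of Theorem~\ref{T2} at parameters $(i-1,n-1)$ and hence contributes $T(i-1,n-1)$ by the inductive hypothesis.

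The main technical obstacle is matching the bound exactly. A naive iteration of Theorem~\ref{T1} applied to every single symbol produced by a three-step chain-lemma decomposition would yield a weaker bound like $(i+1)\cdot 2^{n-1}$ — already apparent in the smallest nontrivial case $n=i=2$, where the classical Draxl chain yields $6$ rather than the asserted $5$. The recursion $T(i,n)=i\cdot 2^{n-1}+T(i-1,n-1)$ effectively saves one factor of $2$ per inductive step by exploiting the shared suffix $\alpha_{i+1},\dots,\alpha_n$: after factoring out $\alpha_n$, the residual problem genuinely descends into $K_{n-1}F/2^mK_{n-1}F$ rather than costing another application of Theorem~\ref{T1} at full strength. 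Exhibiting the chain modification so that exactly $i$ single-symbol preimages of length $2^{n-1}$ are peeled off, with the remainder properly accounted for in the reduced $(i-1,n-1)$ problem, is the core technical step; the requirement that $F\supseteq \mu_{2^{m+1}}$ (and $\operatorname{char}(F)\neq 2$) enters precisely to license the lifts and chain-lemma equivalences at the level of $K_nF/2^{m+1}K_nF$.
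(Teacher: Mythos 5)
Your base case is fine, but the inductive step as written is a plan rather than a proof: you yourself identify ``exhibiting the chain modification so that exactly $i$ single-symbol preimages of length $2^{n-1}$ are peeled off, with the remainder properly accounted for'' as the core technical step, and then you do not carry it out. That step is precisely where the content of the theorem lies, so the proposal has a genuine gap. Moreover, the specific recursion you aim for, $T(i,n)=i\cdot 2^{n-1}+T(i-1,n-1)$, requires the residual class to descend to $K_{n-1}F/2^m K_{n-1}F$ and reappear as $\tilde{\omega}\cup\{\alpha_n\}_{2^m}$ with $\tilde{\omega}$ satisfying the $(i-1,n-1)$ hypothesis. Being in the image of $\operatorname{Shift}$ at level $n$ after cupping with $\{\alpha_n\}$ is strictly weaker than being in the image of $\operatorname{Shift}$ at level $n-1$ before cupping, and you give no mechanism forcing the stronger condition; I see no reason it should hold.

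The paper's argument is organized differently and avoids this issue. First it splits into two cases: the Pfister forms $\pff{\alpha_1,\dots,\alpha_n}$ and $\pff{\beta_1,\dots,\beta_i,\alpha_{i+1},\dots,\alpha_n}$ are either both hyperbolic (then Theorem~\ref{T1} applies to each symbol separately, giving $2\cdot 2^{n-1}$) or anisotropic and hence isomorphic — a dichotomy your proposal never addresses, although the representation-theoretic input needed for any chain modification only exists in the anisotropic case. In the anisotropic case the paper writes $\alpha_i$ as $\sum_r \varphi_r(t_r)\beta_r$ with $\varphi_r=\pff{\beta_1,\dots,\beta_r,\alpha_{i+1},\dots,\alpha_n}$ and rescales $\beta_r$ to $\gamma_r$. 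The $i$ peeled-off symbols then each end (after reordering) in a value $\varphi_r(t_r)^{-1}$ of a \emph{smaller} Pfister form, so Theorem~\ref{T1} applies to a proper sub-symbol and one cups with the leftover slots; their costs are the \emph{decreasing} sequence $2^{n-1},2^{n-2},\dots,2^{n-i}$, not $i$ copies of $2^{n-1}$. The residual class $\{\gamma_1,\dots,\gamma_i,\alpha_{i+1},\dots,\alpha_n\}-\{\alpha_1,\dots,\alpha_n\}$ now agrees with $\{\alpha_1,\dots,\alpha_n\}$ in one more slot, so it is an $(i-1,n)$ problem at the same $n$, and the recursion actually used is $T(i,n)=T(i-1,n)+\sum_{j=1}^{i}2^{n-j}$. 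Both recursions evaluate to the same closed form, but only the paper's version is backed by a construction; your accounting front-loads $i\cdot 2^{n-1}$ and then needs an unjustified saving in the residual term to compensate.
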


\begin{proof}
By induction on $i$. The case of $i=1$ is the Theorem \ref{T1}.
Suppose $i\geq 2$ and that the statement holds true for any number $< i$.
The forms $\langle \! \langle \alpha_1,\dots,\alpha_n \rangle \! \rangle$ and $\langle \! \langle \beta_1,\dots,\beta_i,\alpha_{i+1},\dots,\alpha_n\rangle \! \rangle$ are either both hyperbolic or isomorphic.
If they are hyperbolic, then by the Theorem \ref{T1}, the symbol length of $\omega$ is at most $2^{n-1}+2^{n-1}\leq i \cdot 2^{n-1}+(i-1)\cdot 2^{n-2}+\dots+1\cdot 2^{n-i}$.
Suppose they are anisotropic, and therefore isomorphic.
Then $\alpha_i$ is represented by $\langle \! \langle \beta_1,\dots,\beta_i \rangle \! \rangle' \otimes \langle \! \langle \alpha_{i+1},\dots,\alpha_n\rangle \! \rangle$.
Therefore $\alpha_i=\varphi_1(t_1) \beta_1+\dots+\varphi_{i}(t_{i}) \beta_{i}$ where $\varphi_r=\langle \! \langle \beta_1,\dots,\beta_r,\alpha_{i+1},\dots,\alpha_n \rangle \! \rangle$ and $t_r \in V_{\varphi_r}$ for each $r \in \{1,\dots,i\}$.
Let $\gamma_r$ be $\varphi(t_r) \beta_r$ if $t_r \neq 0$ and $\beta_r$ when $t_r=0$, for each $r \in \{1,\dots,i\}$.
The class $\{\beta_1,\dots,\beta_i,\alpha_{i+1},\dots,\alpha_n\}_{2^{m+1}}-\{\alpha_1,\dots,\alpha_n\}_{2^{m+1}}$ is the sum of the classes
\begin{eqnarray*}
&\{\gamma_1,\dots,\gamma_i,\alpha_{i+1},\dots,\alpha_n\}_{2^{m+1}} - \{\alpha_1,\dots,\alpha_n\}_{2^{m+1}}&, \\  &\{\gamma_1,\dots,\gamma_{i-1},\gamma_{i}^{-1} \beta_{i},\alpha_{i+1},\dots,\alpha_n\}_{2^{m+1}}&,\\ 
&\{\gamma_1,\dots,\gamma_{i-2},\gamma_{i-1}^{-1} \beta_{i-1},\beta_i,\alpha_{i+1},\dots,\alpha_n\}_{2^{m+1}}&,\\
&\vdots & \\ & \{\gamma_1^{-1} \beta_1,\beta_2,\dots,\beta_i,\alpha_{i+1},\dots,\alpha_n\}_{2^{m+1}} & .
\end{eqnarray*}
All these classes are in the image of $\operatorname{Shift}$, because their sum is and the last $i$ classes are.
The first class is congruent to $\{\delta_1,\dots,\delta_{i-1},\alpha_i,\dots,\alpha_n\}_{2^{m+1}}$ for some $\delta_1,\dots,\delta_{i-1} \in F^\times$, and so by the induction hypothesis, the pre-image of the first symbol is of symbol length at most $(i-1) \cdot 2^{n-1}+(i-2)\cdot 2^{n-2}+\dots+1\cdot 2^{n-(i-1)}$.
By the Theorem \ref{T1}, the symbol lengths of the pre-images of the other $i$ symbols are $2^{n-1}+2^{n-2}+\dots+2^{n-i}$.
\end{proof}

\begin{cor}
We proved the following: if $\omega$ is a class in $K_n F/2^m K_n F$ such that the symbol length of $\operatorname{Shift}(\omega)$ is at most 2, then the symbol length of $\omega$ is at most $n\cdot  2^{n-1}+(n-1)\cdot 2^{n-2}\dots+1\cdot 1=(n-1)\cdot 2^n+1$.
\end{cor}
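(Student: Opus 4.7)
The plan is to recognize the corollary as the specialization $i=n$ of Theorem \ref{T2}, followed by evaluating the resulting arithmetic sum in closed form. Since $\operatorname{Shift}$ is injective (it is the left map in the short exact sequence displayed in the Preliminaries), one may work directly with a presentation of $\operatorname{Shift}(\omega)$ as a sum of at most two symbols.

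First I would dispose of the degenerate cases. If the symbol length of $\operatorname{Shift}(\omega)$ is $0$, injectivity of $\operatorname{Shift}$ forces $\omega = 0$. If the symbol length is exactly $1$, Theorem \ref{T1} already gives the sharper bound $2^{n-1}$, which is well below $(n-1)\cdot 2^n + 1$. So I may assume that the symbol length of $\operatorname{Shift}(\omega)$ is exactly $2$, and write
\[
\operatorname{Shift}(\omega) \;=\; \{\alpha_1,\dots,\alpha_n\}_{2^{m+1}} + \{\gamma_1,\dots,\gamma_n\}_{2^{m+1}}.
\]
Next, I would absorb the sign: replacing the first entry $\gamma_1$ by $\gamma_1^{-1}$ negates the second symbol in $K_n F/2^{m+1} K_n F$, so setting $\beta_1 := \gamma_1^{-1}$ and $\beta_j := \gamma_j$ for $j \geq 2$ gives
\[
\operatorname{Shift}(\omega) \;=\; \{\alpha_1,\dots,\alpha_n\}_{2^{m+1}} - \{\beta_1,\dots,\beta_n\}_{2^{m+1}},
\]
which is precisely the hypothesis of Theorem \ref{T2} with $i = n$ (the two symbols need share no entries). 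Applying Theorem \ref{T2} then bounds the symbol length of $\omega$ by
\[
n\cdot 2^{n-1} + (n-1)\cdot 2^{n-2} + \dots + 1\cdot 2^{0} \;=\; \sum_{k=1}^{n} k\cdot 2^{k-1}.
\]

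Finally I would verify the closed-form identity $\sum_{k=1}^{n} k\cdot 2^{k-1} = (n-1)\cdot 2^n + 1$, which can be done either by differentiating $\sum_{k=0}^{n} x^k$ and evaluating at $x=2$, or by a one-line induction on $n$. Neither step presents a genuine obstacle: the re-indexing to the $i = n$ case of Theorem \ref{T2} is automatic because no shared entries are imposed, and the sum evaluation is routine. The only mild subtlety worth stating explicitly is the sign manipulation used to pass from a sum of two symbols to the difference form required by Theorem \ref{T2}.
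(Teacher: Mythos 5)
Your proposal is correct and matches the paper's (implicit) argument: the corollary is exactly the $i=n$ case of Theorem \ref{T2}, applied after rewriting a sum of two symbols as a difference via $-\{\gamma_1,\dots,\gamma_n\}=\{\gamma_1^{-1},\gamma_2,\dots,\gamma_n\}$, together with the evaluation $\sum_{k=1}^{n}k\cdot 2^{k-1}=(n-1)\cdot 2^{n}+1$. Your explicit treatment of the degenerate cases (symbol length $0$ or $1$) and of the sign manipulation fills in details the paper leaves unstated, but the route is the same.
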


\begin{thm}\label{T3}
Suppose $F$ is a field of $\operatorname{char}(F)\neq 2$, $n$ an integer $\geq 2$ and $m$ a positive integer.
Suppose $F\supseteq \mu_{2^{m+1}}$.
Let $\omega$ be a class in $K_n F/2^m K_n F$ such that $\operatorname{Shift}(\omega)$ is of symbol length 3. Then the symbol length of $\omega$ is at most $3\cdot ((n-1)\cdot 2^n+1)$.
\end{thm}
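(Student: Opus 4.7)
The plan is to decompose $\omega$ as a sum $\omega_1+\omega_2+\omega_3$ in $K_n F/2^m K_n F$ with each $\operatorname{Shift}(\omega_i)$ of symbol length at most $2$ in $K_n F/2^{m+1}K_n F$; the preceding corollary then bounds each $\omega_i$'s symbol length by $(n-1)\cdot 2^n+1$, and the three estimates add up to the announced bound $3\cdot ((n-1)\cdot 2^n+1)$.

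To carry this out, I would write $\operatorname{Shift}(\omega)=s_1+s_2+s_3$ with $s_i=\{a_{i,1},\dots,a_{i,n}\}_{2^{m+1}}$, and try to produce a decomposition $\operatorname{Shift}(\omega)=T_1+T_2+T_3$ in $K_n F/2^{m+1} K_n F$ with each $T_i$ of symbol length at most $2$ and each $T_i\in\ker(\operatorname{Exp})$. By the injectivity of $\operatorname{Shift}$ and the exact sequence identifying the image of $\operatorname{Shift}$ with $\ker(\operatorname{Exp})$, each such $T_i$ has a unique preimage $\omega_i\in K_n F/2^m K_n F$ under $\operatorname{Shift}$, and then $\omega=\omega_1+\omega_2+\omega_3$. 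A natural template is $T_i=s_i+t_i$, where $t_1,t_2,t_3$ are single symbols in $K_n F/2^{m+1}K_n F$ satisfying $t_1+t_2+t_3=0$ together with $\operatorname{Exp}(t_i)=\operatorname{Exp}(s_i)$ in $K_n F/2K_n F$ for each $i$; the necessary compatibility $\operatorname{Exp}(t_1)+\operatorname{Exp}(t_2)+\operatorname{Exp}(t_3)=0$ is automatic from the hypothesis $\operatorname{Shift}(\omega)\in\ker(\operatorname{Exp})$.

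The crux of the proof is constructing correction symbols $t_i$ that genuinely sum to zero in $K_n F/2^{m+1} K_n F$, and not merely modulo $2$. I expect this to require a chain-lemma manipulation in the spirit of the proof of Theorem \ref{T2} and of \cite[Theorem 4.1]{Matzri:2014}: by rewriting the presentations of $s_1,s_2,s_3$ through elementary moves on their entries, one should be able to arrange that the associated bilinear Pfister forms $\pi_1,\pi_2,\pi_3$---which by hypothesis satisfy $\pi_1+\pi_2+\pi_3\in I^{n+1}F$---acquire enough common structure (for example, a shared sub-Pfister factor) to permit explicit choices of $t_i$ with the prescribed $\operatorname{Exp}$-classes and zero sum. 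Once the splitting is in place, applying the corollary to each of $\omega_1,\omega_2,\omega_3$ yields the desired upper bound $3\cdot((n-1)\cdot 2^n+1)$.
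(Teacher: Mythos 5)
Your skeleton is exactly the one the paper uses: write $\operatorname{Shift}(\omega)=s_1+s_2+s_3$, correct each $s_i$ by a single symbol $t_i$ with the same class mod $2$ so that the corrections cancel exactly in $K_nF/2^{m+1}K_nF$, pull each length-$\le 2$ piece back through $\operatorname{Shift}$, and apply the corollary three times. But the step you label as the crux --- producing symbols $t_1,t_2,t_3$ with $\operatorname{Exp}(t_i)=\operatorname{Exp}(s_i)$ and $t_1+t_2+t_3=0$ genuinely in $K_nF/2^{m+1}K_nF$ --- is left as a hope (``one should be able to arrange''), and this is where all the mathematical content of the theorem sits. It is also not something you can extract from chain-style rewriting of the individual presentations as in the proof of Theorem~\ref{T2}: those moves manipulate two symbols at a time, whereas here you need a simultaneous normal form for three Pfister forms whose sum lies in $I^{n+1}F$.

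The missing ingredient is a known linkage statement for Pfister forms, namely \cite[Corollary 24.6]{EKM}: since $\langle\!\langle \alpha_1,\dots,\alpha_n\rangle\!\rangle \perp \langle\!\langle \beta_1,\dots,\beta_n\rangle\!\rangle \perp \langle\!\langle \gamma_1,\dots,\gamma_n\rangle\!\rangle$ is trivial mod $I^{n+1}F$, there exist $a,b,c_2,\dots,c_n\in F^\times$ with the three forms isomorphic to $\langle\!\langle a,c_2,\dots,c_n\rangle\!\rangle$, $\langle\!\langle b,c_2,\dots,c_n\rangle\!\rangle$ and $\langle\!\langle a^{-1}b^{-1},c_2,\dots,c_n\rangle\!\rangle$ respectively. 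Taking $t_1=\{a,c_2,\dots,c_n\}_{2^{m+1}}$, $t_2=\{b,c_2,\dots,c_n\}_{2^{m+1}}$, $t_3=\{a^{-1}b^{-1},c_2,\dots,c_n\}_{2^{m+1}}$, multilinearity in the first slot gives $t_1+t_2+t_3=\{1,c_2,\dots,c_n\}_{2^{m+1}}=0$ exactly (not just mod $2$), while the form isomorphisms give $\operatorname{Exp}(t_i)=\operatorname{Exp}(s_i)$ via $I^nF/I^{n+1}F\cong K_nF/2K_nF$. With this citation inserted, your decomposition $\omega=\omega_1+\omega_2+\omega_3$ and the three applications of the corollary go through verbatim and coincide with the paper's argument (the sign convention $T_i=s_i+t_i$ versus $s_i-t_i$ is immaterial, since a negated symbol is again a symbol); without it, the proof is incomplete at its central step.
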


\begin{proof}
Write $\operatorname{Shift}(\omega)=\{\alpha_1,\dots,\alpha_n\}_{2^{m+1}}+\{\beta_1,\dots,\beta_n\}_{2^{m+1}}+\{\gamma_1,\dots,\gamma_n\}_{2^{m+1}}$ as a sum of three symbols.
Then $\langle \! \langle \alpha_1,\dots,\alpha_n\rangle\!\rangle \perp \langle \! \langle \beta_1,\dots,\beta_n\rangle\!\rangle \perp \langle \! \langle \gamma_1,\dots,\gamma_n\rangle\!\rangle$ is trivial mod $I^{n+1} F$, which means by \cite[Corollary 24.6]{EKM} that there exist $a,b,c_2,\dots,c_n \in F^\times$ for which $\langle \! \langle \alpha_1,\dots,\alpha_n\rangle\!\rangle \simeq \langle \! \langle a,c_2,\dots,c_n\rangle\!\rangle$,\\ $\langle \! \langle \beta_1,\dots,\beta_n\rangle\!\rangle \simeq \langle \! \langle b,c_2,\dots,c_n\rangle\!\rangle$ and $\langle \! \langle \gamma_1,\dots,\gamma_n\rangle\!\rangle\simeq \langle \! \langle a^{-1}b^{-1},c_2,\dots,c_n\rangle\!\rangle$.

Now, $\operatorname{Shift}(\omega)-(\{a,c_2,\dots,c_n\}_{2^{m+1}}+\{\beta_1,\dots,\beta_n\}_{2^{m+1}}+\{\gamma_1,\dots,\gamma_n\}_{2^{m+1}})=\{\alpha_1,\dots,\alpha_n\}_{2^{m+1}}-\{a,c_2,\dots,c_n\}_{2^{m+1}}$ is in the image of $\operatorname{Shift}$, and therefore by the Theorem \ref{T2}, the symbol length of its pre-image is at most $n\cdot  2^{n-1}+(n-1)\cdot 2^{n-2}\dots+1\cdot 1$.
Similarly, the classes $\{a,c_2,\dots,c_n\}_{2^{m+1}}+\{\beta_1,\dots,\beta_n\}_{2^{m+1}}+\{\gamma_1,\dots,\gamma_n\}_{2^{m+1}}-(\{a,c_2,\dots,c_n\}_{2^{m+1}}+\{b,c_2,\dots,c_n\}_{2^{m+1}}+\{\gamma_1,\dots,\gamma_n\}_{2^{m+1}})$ and $\{a,c_2,\dots,c_n\}_{2^{m+1}}+\{b,c_2,\dots,c_n\}_{2^{m+1}}+\{\gamma_1,\dots,\gamma_n\}_{2^{m+1}}-(\{a,c_2,\dots,c_n\}_{2^{m+1}}+\{b,c_2,\dots,c_n\}_{2^{m+1}}+\{a^{-1}b^{-1},c_2,\dots,c_n\}_{2^{m+1}})$ are in the image of $\operatorname{Shift}$, and their pre-images are of symbol length at most $(n-1)\cdot 2^n+1$.
Since $\omega$ is the sum of those pre-images, its symbol length is at most $3\cdot((n-1)\cdot 2^n+1)$.
\end{proof}

\begin{thm}\label{T4}
Suppose $F$ is a field of $\operatorname{char}(F)\neq 2$ and $m$ is a positive integer. Suppose $F\supseteq \mu_{2^{m+1}}$. 
If $\omega$ is a class in $K_2 F/2^m K_2 F$ for which the symbol length of $\operatorname{Shift}(\omega)$ is 4, then the symbol length of $\omega$ is at most $46$.
\end{thm}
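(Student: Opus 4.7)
The plan is to extend the telescoping approach of $\Tref{T3}$ to four symbols. Write
\[
\operatorname{Shift}(\omega) = \{\alpha_1,\alpha_2\}_{2^{m+1}} + \{\beta_1,\beta_2\}_{2^{m+1}} + \{\gamma_1,\gamma_2\}_{2^{m+1}} + \{\delta_1,\delta_2\}_{2^{m+1}},
\]
and let $\varphi_1, \varphi_2, \varphi_3, \varphi_4$ denote the associated 2-fold bilinear Pfister forms. The hypothesis $\operatorname{Exp}(\operatorname{Shift}(\omega)) = 0$ translates to $\varphi_1 \perp \varphi_2 \perp \varphi_3 \perp \varphi_4 \in I^3 F$; equivalently, the biquaternion algebras $(\alpha_1,\alpha_2)_{2,F} \otimes (\beta_1,\beta_2)_{2,F}$ and $(\gamma_1,\gamma_2)_{2,F} \otimes (\delta_1,\delta_2)_{2,F}$ are Brauer-equivalent.

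The key step is to insert a bridging single symbol $\{p,q\}_{2^{m+1}}$ and telescope
\[
\operatorname{Shift}(\omega) = \bigl(\{\alpha_1,\alpha_2\}_{2^{m+1}} + \{\beta_1,\beta_2\}_{2^{m+1}} - \{p,q\}_{2^{m+1}}\bigr) + \bigl(\{p,q\}_{2^{m+1}} + \{\gamma_1,\gamma_2\}_{2^{m+1}} + \{\delta_1,\delta_2\}_{2^{m+1}}\bigr),
\]
so that each three-term summand lies in the image of $\operatorname{Shift}$. A suitable $\{p,q\}$ exists precisely when $\langle\!\langle p,q\rangle\!\rangle \equiv \varphi_1 \perp \varphi_2 \pmod{I^3 F}$, which is automatic when the common Brauer class of the two biquaternion algebras has index at most $2$. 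In that case $\Tref{T3}$ applied to each three-term summand yields a pre-image of symbol length at most $15$, so $\omega$ has symbol length at most $30$, well within the target.

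The remaining case is when the common Brauer class has index $4$, so that no single-symbol bridge suffices. To handle this, I would insert a short chain of bridging symbols, each produced by applying \cite[Corollary 24.6]{EKM} to a triple of Pfister forms arising along the way. This decomposes $\operatorname{Shift}(\omega)$ into three three-term Shift-images (each bounded by $\Tref{T3}$ with a cost of at most $15$) together with one single-symbol residue (bounded by $\Tref{T1}$ with a cost of at most $2$). Entries shared between consecutive bridges then allow a single unit of symbol length to be absorbed, reducing the naive total $3 \cdot 15 + 2 = 47$ to $46$.

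The hard part is the construction and length control of the bridging chain in the index-$4$ case: it requires a linkage statement going beyond the three-form lemma \cite[Corollary 24.6]{EKM}, exploiting the biquaternion equivalence $(\alpha_1,\alpha_2)_{2,F} \otimes (\beta_1,\beta_2)_{2,F} \sim (\gamma_1,\gamma_2)_{2,F} \otimes (\delta_1,\delta_2)_{2,F}$. Once the chain is in place, the telescoping and the final bookkeeping are entirely parallel to the proof of $\Tref{T3}$.
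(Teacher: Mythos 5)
There is a genuine gap: your argument only goes through in the easy case where the common Brauer class of $(\alpha_1,\alpha_2)_{2,F}\otimes(\beta_1,\beta_2)_{2,F}$ has index at most $2$ (there your bridging symbol $\{p,q\}$ exists and two applications of \Tref{T3} indeed give $30$). In the index-$4$ case, which is the heart of the matter, you only describe a plan: you acknowledge that you need ``a linkage statement going beyond \cite[Corollary 24.6]{EKM}'' but you neither state nor prove it, and \cite[Corollary 24.6]{EKM} by itself concerns three $n$-fold Pfister forms whose sum lies in $I^{n+1}F$, so it does not produce the chain of bridges you would need between two \emph{pairs} of quaternion symbols with equal sum mod $2$. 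The missing ingredient is precisely the chain lemma for biquaternion algebras of Sivatski \cite{Sivatski:2012}, which is what the paper uses: it provides a chain of three steps joining the pair $(\{\alpha_1,\beta_1\}_2,\{\alpha_2,\beta_2\}_2)$ to the pair $(\{\alpha_3,\beta_3\}_2,\{\alpha_4,\beta_4\}_2)$ in which, at each step, \emph{both} symbols are modified by one common factor $\varphi_j(t_j)$ with $\varphi_j=\langle\!\langle a_jc_j\rangle\!\rangle$.

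Your final bookkeeping is also unsubstantiated, and in fact does not match how the bound $46$ actually arises. You propose ``three three-term Shift-images at cost $15$ each plus a single-symbol residue at cost $2$,'' i.e.\ $47$, and then claim a saving of one unit because ``entries shared between consecutive bridges allow a single unit of symbol length to be absorbed''; no mechanism for that absorption is given, and none is apparent. The paper's count is structurally different: using Sivatski's chain one writes $\operatorname{Shift}(\omega)$ as a sum of eleven classes with trivial image under $\operatorname{Exp}$ --- eight differences of two symbols that agree mod $2$, each with pre-image of symbol length at most $5$ by \Tref{T2} (with $n=2$, $i=2$), and three classes of the form $\{a_j,b_j\}_{2^{m+1}}+\{c_j,d_j\}_{2^{m+1}}-(\{a_j,b_j\varphi_j(t_j)\}_{2^{m+1}}+\{c_j,d_j\varphi_j(t_j)\}_{2^{m+1}})$, each congruent to the single symbol $\{a_jc_j,\varphi_j(t_j)^{-1}\}_{2^{m+1}}$ and hence with pre-image of symbol length at most $2$ by \Tref{T1} --- giving $8\cdot 5+3\cdot 2=46$. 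The point of the common factor $\varphi_j(t_j)$ is exactly that the two simultaneous modifications collapse to one cheap symbol; without the biquaternion chain lemma that saving, and hence the bound $46$, is out of reach by your telescoping alone.
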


\begin{proof}
Write $\operatorname{Shift}(\omega)=\{\alpha_1,\beta_1\}_{2^{m+1}} +\{\alpha_2,\beta_2\}_{2^{m+1}}-(\{\alpha_3,\beta_3\}_{2^{m+1}}+\{\alpha_4,\beta_4\}_{2^{m+1}})$.
Since $\{\alpha_1,\beta_1\}_2 +\{\alpha_2,\beta_2\}_2-(\{\alpha_3,\beta_3\}_2+\{\alpha_4,\beta_4\}_2)=0$, we have $\{\alpha_1,\beta_1\}_2+\{\alpha_2,\beta_2\}_2=\{\alpha_3,\beta_3\}_2+\{\alpha_4,\beta_4\}_2$.
By \cite{Sivatski:2012}, there exist $$a_1,b_1,c_1,d_1,a_2,b_2,c_2,d_2,a_3,b_3,c_3,d_3 \in F^\times , \ \text{such that}$$
\begin{itemize}
\item  $\{\alpha_1,\beta_1\}_2=\{a_1,b_1\}_2$, $\{\alpha_2,\beta_2\}_2=\{c_1,d_1\}_2$.
\item For some $t_1 \in V_{\varphi_1}$, $\{a_1,b_1 \varphi_1(t_1)\}_2=\{a_2,b_2\}_2$ and $\{c_1,d_1\varphi_1(t_1)\}_2=\{c_2,d_2\}_2$ where $\varphi_1=\langle \! \langle a_1c_1\rangle \! \rangle$.
\item For some $t_2 \in V_{\varphi_2}$, $\{a_2,b_2 \varphi_2(t_2)\}_2=\{a_3,b_3\}_2$ and $\{c_2,d_2\varphi_2(t_2)\}_2=\{c_3,d_3\}_2$ where $\varphi_2=\langle \! \langle a_2c_2\rangle \! \rangle$.
\item For some $t_3 \in V_{\varphi_3}$, $\{a_3,b_3 \varphi_3(t_3)\}_2=\{\alpha_3,\beta_3\}_2$ and $\{c_3,d_3\varphi_3(t_3)\}_2=\{\alpha_4,\beta_4\}_2$ where $\varphi_3=\langle \! \langle a_3c_3\rangle \! \rangle$.
\end{itemize}
The class $\{\alpha_1,\beta_1\}_{2^{m+1}}-\{a_1,b_1\}_{2^{m+1}}$ is in the image of $\operatorname{Shift}$, and its pre-image is of symbol length at most 5. The same goes for the classes 
\begin{eqnarray*}
\{a_1,b_1\varphi_1(t_1)\}_{2^{m+1}}-\{a_2,b_2\}_{2^{m+1}},\\
\{a_2,b_2 \varphi_2(t_2)\}_{2^{m+1}}-\{a_3,b_3\}_{2^{m+1}},\\ 
\{a_3,b_3 \varphi_3(t_3)\}_{2^{m+1}}-\{\alpha_3,\beta_3\}_{2^{m+1}},\\ 
\{\alpha_2,\beta_2\}_{2^{m+1}}-\{c_1,d_1\}_{2^{m+1}},\\ 
\{c_1,d_1\varphi_1(t_1)\}_{2^{m+1}}-\{c_2,d_2\}_{2^{m+1}},\\ 
\{c_2,d_2 \varphi_2(t_2)\}_{2^{m+1}}-\{c_3,d_3\}_{2^{m+1}},\\
\{c_3,d_3 \varphi_3(t_3)\}_{2^{m+1}}-\{\alpha_4,\beta_4\}_{2^{m+1}}.
\end{eqnarray*}
The class $\{a_1,b_1\}_{2^{m+1}}+\{c_1,d_1\}_{2^{m+1}}-(\{a_1,b_1\varphi_1(t_1)\}_{2^{m+1}}+\{c_1,d_1\varphi_1(t_1)\}_{2^{m+1}})$ is congruent to $\{a_1c_1,\varphi_1(t_1)^{-1}\}_{2^{m+1}}$. Therefore, it is in the image of $\operatorname{Shift}$, and its pre-image is of symbol length at most 2.
The same applies to the classes $\{a_2,b_2\}_{2^{m+1}}+\{c_2,d_2\}_{2^{m+1}}-(\{a_2,b_2\varphi_2(t_2)\}_{2^{m+1}}+\{c_2,d_2\varphi_2(t_2)\}_{2^{m+1}})$ and $\{a_3,b_3\}_{2^{m+1}}+\{c_3,d_3\}_{2^{m+1}}-(\{a_3,b_3\varphi_3(t_3)\}_{2^{m+1}}+\{c_3,d_3\varphi_3(t_3)\}_{2^{m+1}})$.
Since $\operatorname{Shift}(\omega)$ is the sum of the eleven classes described above, $\omega$ is of symbol length at most $8\cdot 5+3\cdot 2=46$.
\end{proof}

\section{Symbol Length in $K_n F/3^m K_n F$}

In this last section, we consider the situation for the Milnor $K$-groups mod 3.
Suppose $F$ is a field of $\operatorname{char}(F)\neq 3$ containing $\mu_{3^{m+1}}$.
Then by \cite{MS}, $K_2 F/3 K_2 F \cong {_3Br}(F)$, and therefore by \cite{Tignol:1983}, every class $\omega \in K_2 F/3^m K_2 F$ for which $\operatorname{Shift}(\omega)$ is of symbol length 1, is of symbol length at most 3.
We shall now consider the analogous situation in $K_3 F/3^m K_n F$.
For any class $\omega \in K_n F/3^m K_n F$ and field extension $E/F$, $\omega_E$ is the image of $\omega$ in $K_n E/3^m K_nE$.
\begin{thm}\label{T5}
Suppose $F$ is a field of $\operatorname{char}(F)\neq 3$ and $F \supseteq \mu_{3^{m+1}}$.
Let $\omega \in K_3 F/3^m K_3 F$.
If $\operatorname{Shift}(\omega)$ is of symbol length 1, then the symbol length of $\omega$ is at most $30$.
If $F$ is quadratically closed, then the symbol length of $\omega$ is at most 15.
\end{thm}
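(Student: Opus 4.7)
The plan is to extend the strategy used in Theorem \ref{T1} and Tignol's result to the mod-$3$, degree-$3$ setting, leveraging the injectivity of $\operatorname{Shift}$. Writing $\operatorname{Shift}(\omega)=\{\alpha_1,\alpha_2,\alpha_3\}_{3^{m+1}}$, it suffices to decompose this symbol in $K_3 F/3^{m+1} K_3 F$ into a bounded sum of symbols each lying in the image of $\operatorname{Shift}$, and to control the symbol length of each $\operatorname{Shift}$-pre-image in $K_3 F/3^m K_3 F$ using Tignol's three-symbol bound for $K_2 F/3^m K_2 F$ recalled at the start of the section. A symbol $\{a_1,a_2,a_3\}_{3^{m+1}}$ lies in the image of $\operatorname{Shift}$ precisely when $\{a_1,a_2,a_3\}_3=0$ in $K_3 F/3 K_3 F$.

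The hypothesis $\operatorname{Exp}(\omega)=\{\alpha_1,\alpha_2,\alpha_3\}_3=0$ supplies concrete information via the Merkurjev--Suslin norm theorem (applicable since $F\supseteq\mu_3$): it is equivalent to $\alpha_3$ being a reduced norm from the degree-$3$ cyclic algebra $D=(\alpha_1,\alpha_2)_{3,F}$. Write $\alpha_3=\operatorname{Nrd}(y)$ with $y\in D$. Two degenerate cases dispatch immediately: if $\{\alpha_1,\alpha_2\}_3=0$, then by Tignol's bound the $\operatorname{Shift}$-pre-image of $\{\alpha_1,\alpha_2\}_{3^{m+1}}$ in $K_2 F/3^m K_2 F$ has symbol length at most $3$, and cupping with $\{\alpha_3\}_{3^m}$ produces $\omega$ of symbol length at most $3$; if $y\in F$ then $\alpha_3\in(F^\times)^3$, so $\operatorname{Shift}(\omega)=3\cdot\{\alpha_1,\alpha_2,y\}_{3^{m+1}}$ and $\omega$ is a single symbol.

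The substantive case is $D$ a division algebra with $y\notin F$. Then $F(y)\subset D$ is a cubic subfield, cyclic because $\mu_3\subseteq F$, so $F(y)=F(\sqrt[3]{\beta})$ for some $\beta\in F^\times$ and $D\cong(\beta,\gamma)_{3,F}$ for some $\gamma\in F^\times$. The plan is to first invoke a chain lemma for degree-$3$ cyclic algebras (for instance Rost's) to express $\{\alpha_1,\alpha_2\}_{3^{m+1}}-\{\beta,\gamma\}_{3^{m+1}}$, which lies in the image of $\operatorname{Shift}$, as a bounded sum of $\operatorname{Shift}$-images of two-entry symbols each with Tignol-controlled pre-image, in the style of Theorem \ref{T2}. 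Then the residual symbol $\{\beta,\gamma,\alpha_3\}_{3^{m+1}}$ is treated using the projection formula together with $\alpha_3=N_{F(\sqrt[3]{\beta})/F}(y)$, which yield $\{\beta,\alpha_3\}_3=0$ in $K_2 F/3 K_2 F$ (since $\beta$ is a cube in $F(\sqrt[3]{\beta})$). Splitting the three-entry symbol along this relation in the manner of the proof of Theorem \ref{T1} reduces to several two-entry sub-symbols again controlled by Tignol's bound of $3$.

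The main obstacle will be the book-keeping required to match the total count against the stated bound of $30$, and to verify that the improvement to $15$ when $F$ is quadratically closed is precisely a factor of two. I expect this improvement to come either from a transfer argument using a quadratic extension $E=F(\sqrt{d})$ (exploiting that $2$ is invertible in $K_3 F/3^m K_3 F$, so $\omega=2^{-1}\operatorname{cor}_{E/F}(\omega_E)$, and handling $\omega_E$ over the quadratically richer field $E$ with the smaller bound $15$), or from the collapse of certain elementary steps in the chain lemma for degree-$3$ cyclic algebras whose obstructions are killed once all square roots are adjoined. Identifying exactly which mechanism delivers the halving, and ensuring no double-counting in the chain-lemma application, is the delicate part of the argument.
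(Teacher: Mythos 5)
The fatal step is your claim in the main case that, because $\mu_3\subseteq F$, the cubic subfield $F(y)\subset D$ generated by an element $y$ with $\operatorname{Nrd}(y)=\alpha_3$ is automatically cyclic, hence of the form $F(\sqrt[3]{\beta})$. This is false: a cubic subfield of a degree-$3$ division algebra generated by an arbitrary element need not be Galois over $F$ (its discriminant need not be a square), and containing $\mu_3$ only guarantees that \emph{cyclic} cubic extensions are Kummer, not that $F(y)$ is cyclic. Everything downstream depends on this: without a Kummer element you have no $\beta$ with $\beta=y'^3\in F^\times$, no field norm identity $\alpha_3=N_{F(\sqrt[3]{\beta})/F}(\cdot)$ giving $\{\beta,\alpha_3\}_3=0$, and — crucially — no access to Rost's chain lemma \cite{Rost:1999}, which is a statement about \emph{Kummer} elements of degree $3$ and cannot be applied to the presentation you extract from a non-Kummer $y$. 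The paper's proof of Theorem~\ref{T5} is devoted precisely to repairing this: starting from $z$ with $\operatorname{Nrd}(z)=\gamma$, it multiplies $z$ by an element $a_0+b_0x+c_0x^2$ of the subfield $F[x]$, $x^3=\alpha$, chosen so that the product $w$ satisfies $\operatorname{Tr}(w)=\operatorname{Tr}(w^2)=0$, forcing $w^3=\delta\in F^\times$; the resulting change of reduced norm $\delta/\gamma$ is a norm from $F[x]$, which is what lets one trade the slot $\gamma$ for $\delta$ at a cost of three symbols, after which Rost's chain lemma (applied to the Kummer elements $x$ and $w$) yields the telescoping decomposition into five classes, each with $\operatorname{Shift}$-pre-image of length at most $3$.

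This repair is also exactly the mechanism behind the $15$ versus $30$ dichotomy that you flag as unresolved: the two trace conditions (one linear, one quadratic, in $a_0,b_0,c_0$) need not have a nontrivial solution over $F$, but always have one over $F$ or a quadratic extension $E/F$, and $E=F$ when $F$ is quadratically closed. So the bound $15$ is obtained over $E$, and the factor $2$ comes from transferring back via $\operatorname{cor}_{E/F}\circ\operatorname{res}_{E/F}$, using that multiplication by $2$ (invertible mod $3^m$) does not change symbol length — your first guessed mechanism is the right shape, but in your outline nothing ever forces a quadratic extension, which is itself a symptom of the erroneous cyclicity claim (if $F(y)$ were always Kummer, the bound $15$ would hold unconditionally and the statement of the theorem would not need the quadratically closed hypothesis at all). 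The bookkeeping issues you anticipate in comparing $\{\alpha_1,\alpha_2\}$ with a second presentation ``in the style of Theorem~\ref{T2}'' are secondary; until you produce a Kummer element with controlled reduced norm (over $F$ or a quadratic extension), the chain-lemma step and the final norm relation have no foundation.
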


\begin{proof}
Write $\operatorname{Shift}(\omega)=\{\alpha,\beta\gamma\}_{3^{m+1}}$.
By \cite{Suslin:1985}, since $\{\alpha,\beta,\gamma\}_3$ is trivial, $\gamma$ is the reduced norm $\operatorname{Norm}(z)$ of some element $z$ in the symbol algebra $A=(\alpha,\beta)_{3,F}$.
Write $E$ for the maximal subfield of $A$ generated by $x=\sqrt[3]{\alpha}$.
Consider the equations $\tr((a+bx+cx^2)z)=0$ and $\tr(((a+bx+cx^2)z)^2)=0$ where $a,b,c \in F$ where $\tr$ is the reduced trace map.
There exists a nontrivial solution $(a_0,b_0,c_0)$ to these equations in some field $E$ which is either $F$ or a quadratic extension of $F$. In particular $E=F$ when $F$ is quadratically closed.
Write $w=(a_0+b_0x+c_0x^2)z$ in $A \otimes E$.
Then $w^3=\delta \in F^\times$, where $\frac{\delta}{\gamma}=\operatorname{Norm}_{E[x]/E}(a_0+b_0 x+c_0 x^2)$.
By \cite{Rost:1999}, there exist $c_1,c_2,c_3 \in F^\times$ for which the following symbols are isomorphic in $K_2 E/3 K_2 E$:
\begin{itemize}
\item $\{\alpha,\beta\}_3$.
\item $\{\alpha,c_1\}_3$.
\item $\{c_2,c_1\}_3$.
\item $\{c_2,c_3\}_3$.
\item $\{\delta,c_3\}_3$.
\end{itemize}
Now, in $K_3 E/3^{m+1} K_3 E$, $\{\alpha,\beta,\gamma\}_{3^{m+1}}$ is the sum of the following classes
\begin{itemize}
\item $\{\alpha,\beta,\gamma\}_{3^{m+1}}-\{\alpha,\beta,\delta\}_{3^{m+1}}$.
\item $\{\alpha,\beta,\delta\}_{3^{m+1}}-\{\alpha,c_1,\delta\}_{3^{m+1}}$.
\item $\{\alpha,c_1,\delta\}_{3^{m+1}}-\{c_2,c_1,\delta\}_{3^{m+1}}$.
\item $\{c_2,c_1,\delta\}_{3^{m+1}}-\{c_2,c_3,\delta\}_{3^{m+1}}$.
\item $\{c_2,c_3,\delta\}_{3^{m+1}}-\{\delta,c_3,\delta\}_{3^{m+1}}$.
\end{itemize}
Each of the classes above is in the image of $\operatorname{Shift}$, and its pre-image is of symbol length at most $3$ in $K_3 E/3^m K_3 E$, and so the symbol length of $\omega_E$ in $K_3 E/3^m K_3 F$ is at most $15$.
If $E=F$ then we are done.
If $E$ is a quadratic extension of $F$, then since $\operatorname{cor}_{E/F}(\operatorname{res}_{E/F}(\omega))=4 \omega$ is of the same symbol length as $\omega$, the symbol length of $\omega$ in $K_3 F/3^m K_3 F$ is at most $30$.
\end{proof}

\section*{Acknowledgements}
The author thanks Jean-Pierre Tignol for his helpful comments on the manuscript.

\bibliographystyle{alpha}
\def\cprime{$'$}

\end{document}